\theoremstyle{plain}
        \newtheorem{theorem}{Theorem}
        \newtheorem{lemma}[theorem]{Lemma}
\numberwithin{equation}{section}
\let\oldmarginpar\marginpar
\renewcommand\marginpar[1]{\-\oldmarginpar[\raggedleft\footnotesize #1]%
{\raggedright\footnotesize #1}}
\begin{document}

\title{A Sufficient Condition for Blowup of the Nonlinear Klein-Gordon Equation with Positive Initial Energy in FLRW Spacetimes}
\author{}


\date{}

\maketitle

\begin{center}

\bigskip
Jonathon McCollum \footnote{email address: mccolluj@oregonstate.edu}{${}^{,\star}$},
Gregory Mwamba \footnote{email address: gmwamba@ucmerced.edu}{${}^{,\#}$}, and 
Jes\'{u}s Oliver \footnote{email address: jesus.oliver@csueastbay.edu}{${}^{,\dagger}$}

\bigskip
{\it {${}^\star$Oregon State University,}}\\
{\it{Department of Mathematics,}}\\
{\it {Kidder Hall 368,}}\\
{\it {Corvallis, Oregon, 
USA, 97331-4605}}

\bigskip
{\it {${}^\#$}University of California, Merced,}\\
{\it{Department of Applied Mathematics,}}\\
{\it {5200 North Lake Road,}}\\
{\it {Merced, California, USA, 95343}}

\bigskip
{\it {${}^\dagger$}California State University East Bay,}\\
{\it{Department of Mathematics,}}\\
{\it {25800 Carlos Bee Boulevard,}}\\
{\it {Hayward, California, USA, 94542}}

\begin{abstract}
In this paper we demonstrate a sufficient condition for blowup of the
nonlinear Klein-Gordon equation with arbitrarily positive initial energy in
Friedmann-Lemaître-Robertson-Walker spacetimes. This is accomplished using an 
established concavity method that has been employed for similar PDEs in Minkowski
space. This proof relies on the energy inequality associated with this equation,
$E(t_0)\geq E(t)$, also proved herein using a geometric method.
\end{abstract}

\end{center}

\section{Introduction}

The nonlinear Klein-Gordon equations
are a class of important evolution equations 
that describe the movement of spinless
relativistic particles, which can lend understanding in many 
physical applications (see \cite{Yangxu}
and references therein). In this paper we consider the question of finite time blow up for a 
nonlinear class of Klein-Gordon equations propagating on a fixed cosmological 
spacetime. We have adapted the methods of Yang \& Xu in \cite{Yangxu} and Wang 
in \cite{Wang}, both of which prove blowup results for solutions to the Klein-Gordon 
equation in a non-expanding spacetime. The result from \cite{Wang} 
was the first to show a condition for blowup using this general method, and this work 
was expanded upon in \cite{Yangxu} to broaden the conditions that reliably lead to 
blowup. Both of these were done under the assumption of a Minkowski 
background metric -- for which energy conservation for Klein-Gordon is readily available. On the other hand,
our work focuses on adapting this method of proof to backgrounds
that include expanding spacetimes -- for which only an energy
inequality is available. Due
to the accelerating nature of the expansion of the
observable universe, such cosmological spacetimes are of
general interest. The results in this paper also supplement
those in \cite{Gal1}. In that work, the authors
establish small data global existence for 
a class of semilinear Klein-Gordon equations propagating
in a family of expanding cosmological
backgrounds which include the de Sitter spacetime.
On the other hand,
in this paper we are considering {\bf{large}} initial data
satisfying a specific set of conditions that ensure
$L^2$ blow up in finite time.

The general concavity method employed to prove
the blowup result in this paper, along with
in \cite{Wang} and \cite{Yangxu}, is based on
the work of Levine in \cite{Levine} and \cite{Levine2},
which has been a common wellspring for such results.
This method relies on identifying time invariant spaces,
then proving that solutions in this invariant space must
blowup based on consistent negative concavity and slope
of a functional based on this solution. As mentioned, we
employ this method to classes of solutions of the
Klein-Gordon equation, though in more generality
than \cite{Yangxu} and \cite{Wang} by considering all
metrics that have a certain restriction on the expanding
factor. The fact that there must be restrictions on the
expanding factor should satisfy intuition that aggressively
expanding spacetimes would create problems with local
existence for fluids and would smooth out initial data
too quickly to allow for blowup. Furthermore, this method
requires some assumptions on the initial data relative
to the initial energy which is necessary for the concavity
argument and can be broken down into cases
(listed in Table 1 below). This table is adapted
from Yang \& Xu in \cite{Yangxu} to illustrate our
results as well as the remaining unsolved problems
in the context of cosmological spacetimes.
Note that our assumption (\ref{preconditions}) follows directly from Cases II \& III.\\
\\ \\
Table 1:
Obtained results and unsolved problems.\\

\begin{tabular}{|c|c|c|c|c|}
\hline \multirow[b]{2}{*}{ Case I } & \multicolumn{4}{|c|}{ Initial data leading to high energy blowup of problem (\ref{KGF}) } \\
\hline & $I\left(u_0\right)<0$ & $\left\|u_0\right\|^2 \geq \frac{3(\epsilon+2)}{\tilde{m}^2\epsilon} E(t_0)$ & $\frac{3(\epsilon+2)}{\tilde{m}^2\epsilon} E(t_0)>\left(u_0, u_1) \right. >0$ & Still open  \\
\hline Case II & $I\left(u_0\right)<0$ & $\left\|u_0\right\|^2 \geq \frac{3(\epsilon+2)}{\tilde{m}^2\epsilon} E(t_0)$ & $\left(u_0, u_1\right) \geq \frac{3(\epsilon+2)}{\tilde{m}^2\epsilon} E(t_0) >0 $ & Solved in this paper\\
\hline Case III & $I\left(u_0\right)<0$ & $\frac{3(\epsilon+2)}{\tilde{m}^2\epsilon} E(t_0)>\left\|u_0\right\|^2$ & $\left(u_0, u_1\right) \geq \frac{3(\epsilon+2)}{\tilde{m}^2\epsilon} E(t_0) >0$ & Solved in this paper \\
\hline Case IV & $I\left(u_0\right)<0$ & $\frac{3(\epsilon+2)}{\tilde{m}^2\epsilon} E(t_0)>\left\|u_0\right\|^2$ & $\frac{3(\epsilon+2)}{\tilde{m}^2\epsilon} E(t_0)>\left(u_0, u_1\right.) >0$ & Still open  \\
\hline
\end{tabular}\\

As our work follows directly from the method in \cite{Yangxu}, we do not have any specific conditions on the $||u_0||^2$ term. Their process removed specific consideration of this term that had been present in \cite{Wang}, and so covered the Case II and Case III with this value having no specific relationship to the bounding constant. So while a lower bound on this quantity was required for \cite{Wang}, our method has no such requirement and therefore covers the two cases.
\section{Setup and main theorem}

Let $n\in\mathbb{N}$ and $(\mathcal{M},{\mathbf{g}})$
be a Friedmann-Lemaître-Robertson-Walker (FLRW) spacetime
with topology $\mathbb{R}^+\times\mathbb{R}^n$ and metric
\begin{equation}
    {\mathbf{g}} := -dt^2 + a(t)^2\big(dx_1^2+dx_2^2+...+dx_n^2\big) .
    \label{mFLRW}
\end{equation}
Let $m\neq 0$ and $\Box_g$ be the covariant wave operator
associated to $\mathbf{g}$ defined by 
\begin{equation}
\square_{{\mathbf{g}}}u := \frac{1}{\sqrt{|{\mathbf{g}}|}}\partial_{\alpha}\left({\mathbf{g}}^{\alpha\beta} \sqrt{|{\mathbf{g}}|} \partial_{\beta}u\right ) ,
\end{equation}
where $|{\mathbf{g}}|=-\det({\mathbf{g}}_{\alpha\beta})$, ${\mathbf{g}}^{\alpha\beta}$ are the components of the inverse of ${\mathbf{g}}_{\alpha\beta}$, and
greek indices run from $0$ to $n$. For the FLRW metric~\eqref{mFLRW},we have
\begin{equation}
    \square_{{\mathbf{g}}}u=-u_{tt}-
    n\frac{{\dot a}}{a} u_t+ \frac{1}{a^2}\Delta u , 
\end{equation}
with $\Delta$ denoting the
Laplacian operator on the $n$-dimensional flat metric 
\[\Delta :=\sum_{i=1}^{n}\frac{\partial^2}{\partial x^2_i}.\]
Let $0<t_0<T$, $t\in [t_0,T)$,
and $x\in\mathbb{R}^n$. In this work we study solutions
to the Cauchy problem for the
nonlinear Klein-Gordon equation
    \begin{equation} \label{KGF}
    	\begin{cases}
	&- \square_{{\mathbf{g}}}u + m^2u=f(u) \\
        &u(t_0,x) = u_0(x),
       \  u_t(t_0,x)=u_1(x).
	\end{cases}
	\end{equation}
where $u_0\in H_0^1(\mathbb{R}^n)$ and $u_1\in L^2(\mathbb{R}^n)$. This
Cauchy problem describes a local self-interaction for a massive scalar field.
In quantum field theory the matter fields are described by a
function $u:[t_0,T)\times \mathbb{R}^n \rightarrow \mathbb{R}$ that must satisfy equations of motion. In
the case of a massive scalar field, the equation of motion is the semilinear
Klein-Gordon equation in \eqref{KGF}.\\

We now specify the assumptions in our work.

\subsection{Admissible nonlinearities}

For the function $f:\mathbb{R}\rightarrow \mathbb{R}$,
we will assume
that there exists $ \epsilon > 0$
such that
\begin{equation} \label{fepsilon}
    f(s)s \geq (2+\epsilon)F(s) \qquad\mbox{ and }
    \qquad F(s):=\int_0^sf(\xi) \ d\xi.
\end{equation}
%

Let us consider an example of a type of nonlinearity that satisfies 
condition (\ref{fepsilon}): Take $f(u)=u^p$ with $p>1$; it follows that
\begin{equation*}
     s^{p} s \geq (2+\epsilon)\frac{s^{p+1}}{p+1}
     \qquad \Rightarrow \qquad p+1 \geq (2+\epsilon) .
\end{equation*}
Hence power nonlinearities with $p\geq 1+\epsilon$
will satisfy Eq.(\ref{fepsilon}). Another example of
an admissible nonlinearity is the case of a {\it{focusing}}
power nonlinearity $f(u)=|u|^{p-1}u$. The
{\it{defocusing}} power nonlinearity $f(u)=-|u|^{p-1}u$
is a {\it{non-example}} of an admissible nonlinearity.

Next we describe assumptions on the admissible nonlinearities that are
needed to ensure local wellposedness for problem \eqref{KGF}.\\

\noindent {\bf{Definition:}} A function $h:=h(x,s)$,
$h:\mathbb{R}^n\times \mathbb{R}\rightarrow\mathbb{R}$
is said to be {\it{Lipshitz continuous in $s$ with exponent $\alpha$}},
if there exist $\alpha\geq 0$ and $C>0$ such that
\[|h(x,s)-h(x,v)|\leq C|s-v|(|s|^{\alpha}+|v|^{\alpha})
\qquad {\text{for all }}s,v\in\mathbb{R}, x\in\mathbb{R}^n . \]    

In our work we will assume that $f(s)$
satisfies the following two assumptions:
\begin{enumerate}[1)]
    \item  $f(0)=0$,
    \item $f$ is Lipshitz continuous with exponent $0\leq \alpha\leq \frac{2}{n-2}$.
\end{enumerate}

The restriction on the exponent $\alpha$
is needed for local wellposedness for problem
\eqref{KGF} to hold. We discuss this in more detail in Section 3.

\subsection{Assumptions on the expanding factor $a(t)$}

We consider cosmologies undergoing an accelerated expansion
in the direction of positive time $t$. This expansion corresponds to 
\begin{equation}\label{accel_exp}
 a(t)>0 \qquad {\text{and}}\qquad
 \dot a(t):=\frac{da}{dt}>0 \ ,
\end{equation}
for all $t>0$. Let $a_0:=a(t_0)$, $\dot{a}_0:=\dot{a}(t_0)$; we assume
that there exists a $t_0>0$ such that
\begin{equation}\label{initial_time}
	\frac{\dot{a}(t)}{a(t)}\leq
	\frac{\dot{a}_0}{a_0}\leq \frac{\epsilon}{6n} \ ,
\end{equation}
holds for all $t\geq t_0$. This assumption is essential in order
to treat the effect of the accelerated expansion perturbatively.

Next we codify an assumption
on the mass $m$ and the expanding factor
$a(t)$ that is needed for local wellposedness for
problem \eqref{KGF} to hold.
Define the {\it{curved mass $M(t)$}} (see \cite{Gal1}) to be
\begin{equation}\label{eff_mass}
    M^{2}(t):=m^2+\Big(\frac{n}{2}-\frac{n^2}{4}\Big)
    \Big(\frac{\dot{a}(t)}{a(t)}\Big)^2-\frac{n}{2}
    \frac{\ddot{a}(t)}{a(t)}.
\end{equation}
We make the following assumption on $M(t)$:
there
exists a $c_0>0$ such that
\begin{equation}\label{curved_mass_cond}
  M(t)>c_0>0, \qquad \dot{M}(t)\leq 0, \qquad
{\text{for all }}t\in[t_0,\infty)  .
\end{equation}
This assumption will be
satisfied by the FLRW and de Sitter metrics
under suitable conditions on the parameters.
See Section 2.4
for more details.

\subsection{Norms, energy, and modified Nehari
functional.}

We denote the $L^2$ norm and inner product by
\begin{align*}
    &\|u\|:=\|u\|_{L^{2}(\mathbb{R}^n)}
    :=\Big(\int_{\mathbb{R}^{n}}u^{2}
    d x\Big)^{1/2} \ , \qquad 
    (u,v):=\int_{\mathbb{R}^n}(uv)
    \ d x \ .
\end{align*}
We define the functional spaces
\begin{align*}
	H^{1}&:=H^{1}(\mathbb{R}^n):=\Big\{u\in L^2 \ \big| \  \|u\|_{H^1}
	=\|(1-\Delta)^{1/2}u\|<+\infty \Big\},\\
	H_0^{1}&:=H_0^{1}(\mathbb{R}^n):=\Big\{u\in H^1 \ \big| \ {\text{ supp$(u)$ is
	compact in $\mathbb{R}^n$}} \Big\}.
\end{align*}
The energy for this problem is defined to be
\begin{equation} \label{energydef}
    E(t) := \frac{1}{2}\left(\| u_t\|^2 +m^2\| u \|^2
    + \frac{1}{a^2}\|\nabla u \|^2 -
    2\int_{\mathbb{R}^n}^{} F(u) \  d x \right) ,
\end{equation}
where $\nabla u$ denotes the spatial gradient of $u$. We also define the {\it{modified Nehari functional}}
\begin{equation}
   I(u) := \int_{\mathbb{R}^n}\left(m^2u^2 + 
   \frac{1}{a^2}(\nabla u)^2 + n\frac{\Dot{a}}{a}\Tilde{m}^2uu_t 
   - uf(u)\right)d x.
\end{equation}
Note that because of $u_t$, the modified Nehari functional $I(u)$
is defined on functions that depend on time. Since $I(u)$
is a function of time $t$, we will use the notation $I(u(t))$ when we wish to emphasize
this fact. Now we define the {\it{unstable set for the Nehari functional}}
\begin{equation} \label{unstable}
    \mathcal{B} := \{u \in C([t_0,T);H_0^1(\mathbb{R}^n))\cap C^1([t_0,T);L^2(\mathbb{R}^n)) \ |\ I(u)<0\} . 
\end{equation}

The functional space in this definition
comes directly from the regularity
given by local wellposedness
theorem in section 3. The key here is that our assumptions
guarantee that the sign condition
$I(u(t))<0$ will be preserved by the flow. This fact is proved in section 6.1.

In the following, we will work as if $u$
is smooth and supported away from spatial infinity.
This assumption can be removed by a standard approximation argument.

\subsection{Statement of main theorem}
We are now ready to state our main result.

\begin{theorem}\label{main_result}
Suppose that there exists $ \epsilon > 0$ such that Eq. \eqref{fepsilon}
is satisfied. Assume that $f(0)=0$ and that $f(s)$ is Lipshitz continuous
with exponent $0\leq \alpha\leq \frac{2}{n-2}$. Suppose
that the expanding factor $a(t)$ satisfies
Eq. \eqref{accel_exp} and that the initial time $t_0$ satisfies
Eq. \eqref{initial_time}. Assume that $m>0$, and let $\tilde{m}=min\{1,m\}$. Further, assume that there is a positive number $c_0$
such that the curved mass $M(t)$ satisfies \eqref{curved_mass_cond}. Let $u_0(x)\in H_0^1$ 
and $u_1(x)\in L^2$. Suppose that $u_0,u_1$ are such that $I(u(t_0))<0$ and that
\begin{equation} \label{preconditions}
    (u_0,u_1)\geq\frac{2(\epsilon + 2)}{\tilde{m}^2\big(\epsilon-2n\big(\frac{\Dot{a}_0}{a_0}\big)\big)}
    E(t_0) = \frac{3(\epsilon+2)}{\tilde{m}^2\epsilon}E(t_0)>0 \ ,
\end{equation}
holds for the initial data. Then the corresponding local solution to (\ref{KGF}) blows 
up in finite time. Furthermore, the maximal time 
of existence $T_{max}$ has the explicit
upper bound
\[T_{max}<t_0+\frac{B(t_0)}{\epsilon B'(t_0)}
 \ , \]
 with
 \[B(t):=\|mu\|^2 \ , \qquad
 B'(t):=2(m^2u,u_t) \ .\]
\end{theorem}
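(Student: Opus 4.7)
The argument follows the Levine concavity method as adapted to the FLRW geometry. First I would invoke the local wellposedness theory from Section 3 to produce a maximal solution $u\in C([t_0,T_{max});H_0^1)\cap C^1([t_0,T_{max});L^2)$, and record the two structural facts that drive everything: the invariance of the unstable set $\mathcal{B}$ announced in Section 6.1 (so $I(u(t))<0$ throughout $[t_0,T_{max})$), and the energy inequality $E(t)\leq E(t_0)$ obtained via the geometric method advertised in the abstract.

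Next I would set $B(t):=\|mu\|^2=m^2\|u\|^2$, differentiate to get $B'(t)=2(m^2u,u_t)$, and substitute $u_{tt}$ from \eqref{KGF} (integrating by parts the $\Delta u$ contribution, legitimate because $u\in H_0^1$) to obtain
\[
B''(t)=2m^2\|u_t\|^2+2m^2\bigl[(u,f(u))-n\tfrac{\dot a}{a}(u,u_t)-\tfrac{1}{a^2}\|\nabla u\|^2-m^2\|u\|^2\bigr].
\]
Rewriting $\int F(u)\,dx$ in terms of the energy and then applying the structural hypothesis $f(s)s\geq(2+\epsilon)F(s)$ from \eqref{fepsilon} (dropping the nonnegative $\tfrac{1}{a^2}\|\nabla u\|^2$), I expect a pointwise lower bound of the form
\[
B''(t)\geq m^2(4+\epsilon)\|u_t\|^2+m^2\epsilon B(t)-n\tfrac{\dot a}{a}B'(t)-2m^2(2+\epsilon)E(t_0).
\]
The friction contribution $-n\tfrac{\dot a}{a}B'$ is the only structural novelty compared to the Minkowski computations in \cite{Wang,Yangxu}.

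The core step is to combine this with the Cauchy-Schwarz bound $(B'(t))^2\leq 4m^2 B(t)\|u_t\|^2$ to produce a concavity inequality
\[
B(t)B''(t)-(1+\gamma)(B'(t))^2\geq 0,\qquad t\in[t_0,T_{max}),
\]
for some $\gamma>0$ proportional to $\epsilon$. After Cauchy-Schwarz the surviving obstruction is
\[
m^2\epsilon B(t)-n\tfrac{\dot a}{a}B'(t)-2m^2(2+\epsilon)E(t_0)\geq 0,
\]
which at $t=t_0$ is precisely what assumption \eqref{preconditions} (coupled with $\dot a_0/a_0\leq\epsilon/(6n)$) is calibrated to guarantee. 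A bootstrap then propagates positivity: the pointwise lower bound forces $B''(t_0)>0$, hence $B'$ and $B$ are nondecreasing just past $t_0$, while $\dot a/a$ is nonincreasing by \eqref{initial_time}, so the algebraic inequality persists on $[t_0,T_{max})$. If a sharper constant is needed I would introduce the classical Levine auxiliary $L(t)=B(t)+\beta(t-t_0+\tau)^2$ to tune the effective exponent to $\gamma=\epsilon$ and thereby recover the explicit constant in the stated time bound.

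Once the concavity inequality holds globally, $(B^{-\gamma})''\leq 0$, so $B^{-\gamma}$ is concave, positive at $t_0$, with $(B^{-\gamma})'(t_0)=-\gamma B(t_0)^{-\gamma-1}B'(t_0)<0$. Concavity forces $B^{-\gamma}$ to vanish at some $T^*\leq t_0+\tfrac{B(t_0)}{\gamma B'(t_0)}$, at which $\|u(t)\|_{L^2}\to\infty$; this is incompatible with $t<T_{max}$, so $T_{max}\leq T^*$ and the stated upper bound follows. The chief technical obstacle I anticipate is the uniform-in-time control of the expansion-friction term $-n\tfrac{\dot a}{a}B'$: it is the interaction of \eqref{initial_time} with \eqref{preconditions} that keeps the concavity inequality alive globally, and verifying the bootstrap rigorously (together with the sharp constant) is where I expect most of the real work to sit.
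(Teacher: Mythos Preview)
Your overall architecture is right, but there is a genuine gap at the step where you claim the ``surviving obstruction''
\[
m^2\epsilon\,B(t)\;-\;n\tfrac{\dot a}{a}\,B'(t)\;-\;2m^2(2+\epsilon)E(t_0)\;\ge\;0
\]
is, at $t=t_0$, ``precisely what assumption \eqref{preconditions} is calibrated to guarantee.'' It is not. Assumption \eqref{preconditions} is a lower bound on $(u_0,u_1)$, whereas your obstruction at $t_0$ asks for a lower bound on $B(t_0)=m^2\|u_0\|^2$ (the friction term $-n\tfrac{\dot a_0}{a_0}B'(t_0)$ is negative, so it only hurts). The paper is explicit that no hypothesis on $\|u_0\|^2$ is imposed; indeed Case~III in Table~1 has $\|u_0\|^2<\tfrac{3(\epsilon+2)}{\tilde m^2\epsilon}E(t_0)$, and for such data your obstruction can fail at $t_0$. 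The Levine auxiliary $L(t)=B(t)+\beta(t-t_0+\tau)^2$ does not repair this: it tunes the exponent once the concavity inequality already holds, but it cannot manufacture the missing positivity at the initial time.

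The cause of the gap is that by taking $\gamma=\epsilon/4$ you spend \emph{all} of the $\|u_t\|^2$ in the Cauchy--Schwarz step, so the residual involves only $B(t)=m^2\|u\|^2$. The paper instead takes the less aggressive choice $\lambda=\tfrac{\epsilon}{2}+1$ (equivalently a smaller Levine exponent), which leaves a surviving $\tfrac{\epsilon}{2}\|u_t\|^2$ inside $\xi(t)$. That residual is then paired with $\tfrac{\epsilon}{2}m^2\|u\|^2$ via $\|u_t\|^2+m^2\|u\|^2\ge 2m\,|(u,u_t)|\ge 2\tilde m^2(u,u_t)$, converting the obstruction into
\[
\tilde m^2\Bigl(\epsilon-2n\tfrac{\dot a}{a}\Bigr)(u,u_t)\;-\;2(\epsilon+2)E(t_0)\;\ge\;\tfrac{2}{3}\epsilon\,\tilde m^2(u,u_t)-2(\epsilon+2)E(t_0),
\]
which \emph{is} exactly what \eqref{preconditions} (together with the monotonicity $(u,u_t)\ge(u_0,u_1)$ already established in the invariance lemma) makes nonnegative. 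In short: keep half of the $\|u_t\|^2$ budget so you can trade $\|u_t\|^2+m^2\|u\|^2$ for $(u,u_t)$, the quantity the hypothesis actually controls.
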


One of the advantages of this result is the
simple, explicit form for the upper bound $T_{max}$;
from
this form one can readily infer how
each parameter affects the time of existence. For instance,
larger $\epsilon$ results in shorter
time of existence. Similarly, larger
inner product
$B'(t_0):=2(m^2u(t_0),u_t(t_0))$ also results in shorter
time of existence. From our main theorem
we can also read off how some
commonly referenced cosmological
backgrounds affect $T_{max}$: For example, for FLRW
with power expansion $a(t)=t^k$, we have
    \[\frac{\dot{a}(t)}{a(t)}=\frac{k}{t}\]
and therefore $t_0=6nk/\epsilon$.
Patching this together with the local
existence result means that as $k$
increases, we have a larger and larger
region of safety. So in this
case, the faster the expansion, the longer
it takes for the nonlinear effects to kick
in. By contrast, in the cases of
Minkowski spacetime $a(t)=1$ or de Sitter $a(t)=e^{Ht}$,
the choice of initial time $t_0>0$ has no effect
on the upper bound $T_{max}$.

As mentioned in the introduction, this result is complementary
to the result in \cite{Gal1}. In that work, the authors
prove small data global existence for 
a class of semilinear Klein-Gordon equations propagating
in a family of expanding cosmological
backgrounds which include de Sitter. The main point of departure
from that work is the fact that here we are considering {\bf{large}} initial data
in $H_0^1\times L^2$. This comes from the fact that our initial condition
\eqref{preconditions} excludes the possibility of small initial data in
these functional spaces. In particular, our initial conditions
imply that the norm of initial data cannot be too small
due to the existence of a positive lower bound for the $H^1$ norm
at the initial time $t_0$. For the interested reader, we
prove this fact in an Appendix at the end of this manuscript.

\subsection{Applicable cosmology examples}
Two of the most commonly referenced cosmological
backgrounds have expanding factors that allow the
use of our method. For certain
combinations of parameters, FLRW space with
expanding factor $a(t)= t^k$, $k>0$ works as
long as our initial time is spaced 
far enough forward from the singularity at $t=0$.
De Sitter space works as long as the Hubble constant $H> 0$
is suitably low.
We present the relevant computations here.\\

\subsubsection{De Sitter} For the de Sitter metric, we have $a(t)= e^{Ht}$, with $H>0$. In this case we get
\begin{equation*}
    \frac{\dot{a}(t)}{a(t)} = H.
\end{equation*}
Thus we require that $H<\frac{\epsilon}{6n}$ for our method to be applicable. Next we compute
the curved mass
\begin{align*}
    &M^{2}(t)=m^2+\Big(\frac{n}{2}-\frac{n^2}{4}\Big)
    H^2-\frac{n}{2}H^2=\Big(m-\frac{n}{2}H\Big)
    \Big(m+\frac{n}{2}H\Big),\\
    &\dot{M}(t)=0,
\end{align*}
so that condition \eqref{curved_mass_cond} is satisfied if
\[m-\frac{n}{2}H>0.\]
In particular, our method
is applicable
if $H$ is sufficiently small and $m$ is sufficiently large.

\subsubsection{FLRW}
For the Friedmann–Lemaître-Robertson–Walker metric with power 
expansion we have $a(t)= t^k$, 
$k>0$. Thus
\begin{equation*}
    \frac{\dot{a}(t)}{a(t)} = 
    \frac{k}{t}.
\end{equation*}
So to satisfy condition \eqref{initial_time}
we choose
\begin{equation}
    t_0=\frac{6kn}{\epsilon}.
\end{equation}
Next we compute the curved mass
\begin{align*}
    &M^{2}(t)=m^2+\Big(\frac{n}{2}-\frac{n^2}{4}\Big)
    \Big(\frac{k}{t}\Big)^2-\frac{n}{2}\frac{k(k-1)}{t^2}=m^2+\frac{nk}{2t^2}\Big(
    1-\frac{nk}{2}\Big)
\end{align*}
from which we can see that 
condition \eqref{curved_mass_cond} 
is satisfied if $0<k\leq \frac{2}
{n}$, for example.

\section{Local Wellposedness}

In this section we make use of the local wellposedness result in \cite[Section 1]{Gal1}. We state
the theorem, hypotheses, and (readjusted) notation for the reader's convenience here.

\begin{theorem}[Galstian and Yagdjian, 2014]
Suppose that the expanding factor $a(t)$ satisfies
assumptions \eqref{accel_exp} and \eqref{initial_time}. Suppose that $m>0$ and that there is a positive number $c_0$
such that the curved mass $M(t)$ satisfies \eqref{curved_mass_cond}. Consider the Cauchy problem for the equation
    \begin{equation} \label{KGF_Gal}
    	\begin{cases}
	&u_{tt} + n\frac{\dot a}{a}u_t-a^{-2}\Delta u + m^2u-V_{u}^{'}(x,t,u)=0 \\
        &u(t_0,x) = u_0(x),
       \  u_t(t_0,x)=u_1(x).
	\end{cases}
	\end{equation}
 where $V_{u}^{'}(x,t,u)=-\Gamma(t)f(u)$
 with $\Gamma\in C([t_0,\infty))$, $f$ Lipshitz continuous with exponent
 $0\leq \alpha\leq \frac{2}{n-2}$ and $f(0)=0$. Then for every
 $u_0(x)\in H^1(\mathbb{R}^n)$ and $u_1(x)\in L^2(\mathbb{R}^n)$ there exists $T_1>t_0$ such that the problem
 \eqref{KGF_Gal} has a unique solution $u\in C([t_0,T_1);H_0^1(\mathbb{R}^n))\cap C^1([t_0,T_1);L^2(\mathbb{R}^n))$
\end{theorem}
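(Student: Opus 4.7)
The plan is to deduce local well-posedness from a standard Picard iteration argument, once a clean linear energy estimate is in hand. My first step would be to remove the friction term $n(\dot a/a)u_t$ by the substitution $w(t,x):=a(t)^{n/2}u(t,x)$. A direct computation, together with the definition \eqref{eff_mass} of the curved mass, shows that $u$ solves \eqref{KGF_Gal} if and only if $w$ solves
\begin{equation*}
w_{tt}-a(t)^{-2}\Delta w + M^2(t)\, w = a(t)^{n/2}\Gamma(t)\, f\!\left(a(t)^{-n/2}w\right).
\end{equation*}
Assumption \eqref{curved_mass_cond} now guarantees that the linear operator on the left is a time-dependent Klein--Gordon operator with a strictly positive, non-increasing squared mass $M^2(t)$, and \eqref{accel_exp}--\eqref{initial_time} guarantee that $a(t)^{-2}$ is positive and, on any compact subinterval of $[t_0,\infty)$, bounded above and below.

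For the linearized problem with source $g\in L^1_tL^2_x$ I would prove the energy estimate
\begin{equation*}
\sup_{t\in[t_0,T_1)}\bigl(\|w_t(t)\|_{L^2}+\|w(t)\|_{H^1}\bigr)\lesssim \|w_1\|_{L^2}+\|w_0\|_{H^1}+\int_{t_0}^{T_1}\|g(s)\|_{L^2}\,ds
\end{equation*}
by differentiating in time the natural energy $\mathcal{E}(t):=\tfrac12\bigl(\|w_t\|^2+a(t)^{-2}\|\nabla w\|^2+M^2(t)\|w\|^2\bigr)$. The terms coming from the time-dependent coefficients enter with good signs: $\dot M\le 0$ gives a non-positive contribution, while $\dot a>0$ only improves the coefficient of $\|\nabla w\|^2$, so a Gronwall-type argument closes the estimate with a constant depending only on $a_0$, $\dot a_0$ and $c_0$.

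Equipped with this linear estimate, I would set up a contraction map on the complete metric space
\begin{equation*}
X_{T_1,R}:=\bigl\{u\in C([t_0,T_1);H_0^1)\cap C^1([t_0,T_1);L^2):\|u\|_{L^\infty_tH^1_x}+\|u_t\|_{L^\infty_tL^2_x}\le R\bigr\}
\end{equation*}
by defining $\Phi(v)=w$, where $w$ solves the linear equation with forcing $\Gamma(t)f(v)$ and the prescribed initial data. The Lipschitz assumption on $f$ with exponent $\alpha$ combined with the Sobolev embedding $H^1(\mathbb{R}^n)\hookrightarrow L^{2(\alpha+1)}(\mathbb{R}^n)$, valid precisely because $0\le\alpha\le\frac{2}{n-2}$, yields
\begin{equation*}
\|f(u)-f(v)\|_{L^2}\lesssim \|u-v\|_{H^1}\bigl(\|u\|_{H^1}^\alpha+\|v\|_{H^1}^\alpha\bigr),
\end{equation*}
and $f(0)=0$ kills the constant term. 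Inserting this into the linear estimate gives, for $T_1-t_0$ small enough depending on $R$, both self-mapping and contractivity of $\Phi$, hence a unique fixed point which is the desired solution. Uniqueness on the full interval and continuous dependence on the data follow from the same Lipschitz bound applied to the difference of two solutions, combined again with the linear energy estimate.

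The main obstacle is the critical endpoint $\alpha=2/(n-2)$: for strictly subcritical $\alpha$ the Sobolev embedding has room to spare, and a positive power of $T_1-t_0$ extracted from the $L^1_t$ norm of the forcing makes the contraction almost automatic; at the endpoint the embedding is sharp and one loses that free power of the interval length. Bypassing this cleanly requires either an auxiliary Strichartz-type estimate for the variable-coefficient operator $\partial_t^2-a(t)^{-2}\Delta + M^2(t)$, or a bootstrap argument that first controls $u$ in a mixed space-time norm where smallness of $T_1-t_0$ is genuine. This is the step where the proof genuinely relies on linear dispersive theory for \eqref{KGF_Gal} rather than a purely energy-based ODE-in-time argument, and it is the reason the statement is attributed to the dedicated analysis of \cite{Gal1}.
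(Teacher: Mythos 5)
The first thing to note is that the paper does not prove this statement at all: it is imported, with readjusted notation, from \cite[Section 1]{Gal1}, so there is no in-paper proof to compare yours against. That said, your sketch is essentially the standard argument and is consistent with how the cited reference is set up: the substitution $w=a^{n/2}u$ is exactly what produces the curved mass \eqref{eff_mass} (your transformed equation checks out against that definition), the energy $\mathcal{E}(t)$ you propose has time derivative $(w_t,g)-\dot a\,a^{-3}\|\nabla w\|^2+M\dot M\|w\|^2$, so \eqref{accel_exp} and \eqref{curved_mass_cond} do supply the claimed good signs, and the product estimate via H\"older with $\tfrac12=\tfrac{1}{2(\alpha+1)}+\tfrac{\alpha}{2(\alpha+1)}$ together with $H^1\hookrightarrow L^{2(\alpha+1)}$ closes the contraction. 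The one small caveat is that the equivalence of $\mathcal{E}$ with $\|w_t\|^2+\|w\|_{H^1}^2$ uses the lower bound $a^{-2}(t)\geq a^{-2}(T_1)$ on the compact interval, so your constants degrade as $T_1$ grows; this is harmless for a local statement.

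Your final paragraph, however, misdiagnoses the endpoint $\alpha=\tfrac{2}{n-2}$ and flags a nonexistent obstacle. At that value $2(\alpha+1)=\tfrac{2n}{n-2}$ is precisely the critical Sobolev exponent, and the embedding $H^1(\mathbb{R}^n)\hookrightarrow L^{2n/(n-2)}(\mathbb{R}^n)$ is still valid for $n\geq 3$; what is lost at criticality is compactness, which you never use. The small parameter making $\Phi$ a contraction is not ``room to spare'' in the Sobolev embedding but the factor $T_1-t_0$ extracted from the time integration of the forcing, $\int_{t_0}^{T_1}\|\Gamma f(v)\|_{L^2}\,ds\leq C(T_1-t_0)R^{\alpha+1}$, and that factor is available at the endpoint exactly as in the subcritical range. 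Note also that the resulting power $\alpha+1\leq\tfrac{n}{n-2}$ sits strictly below the $H^1$-critical power $\tfrac{n+2}{n-2}$ at which Strichartz estimates genuinely become necessary. So no linear dispersive theory is required: the energy-plus-contraction argument you already wrote closes at $\alpha=\tfrac{2}{n-2}$, and the concluding caveat should be removed rather than left as a gap.
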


The statement of this theorem, combined with our assumptions in Section 2 and with the fact that
$H_{0}^1(\mathbb{R}^n)\subset H^1(\mathbb{R}^n)$ immediately implies that
the Cauchy problem \eqref{KGF} satisfies the conditions of this theorem with $\Gamma(t)=1$.

\section{Energy Formalism}
In this section we develop
the framework needed to prove
our energy estimate. We largely
follow the setup of \cite{Oliver}
and \cite{Oliver2}. Define the
{\it{energy-momentum tensor}} to be
\begin{equation}
    T_{\alpha\beta} := \partial_\alpha u \partial_\beta u
    - \frac{1}{2}g_{\alpha\beta}
    \Big(\partial^\mu u\partial_\mu u + m^2 u^2 - 2 F(u)  \Big)\;.
\end{equation}
Let $D$ be the covariant derivative corresponding to the metric $g$. The energy momentum tensor is divergence free
\[D^\alpha T_{\alpha\beta}= 0 .\] Given a (smooth) 
vector field $X$ we define the 1-form
\[{}^{(X)}P_\alpha := T_{\alpha\beta}X^\beta .\]
Taking the divergence yields
\begin{equation} \label{divergence}
    D^\alpha \big({}^{(X)}P_\alpha \big) = \frac{1}{2}\;^{(X)}\pi^{\alpha\beta}T_{\alpha\beta}
\end{equation}
where the symmetric 2-tensor $^{(X)}\pi$ is
the \textit{deformation tensor} of
$g$ with respect to $X$. Integrating this divergence
over the spacetime region
\[\{(t,x) \;|\;t_0\leq t\leq t_1\}\] and using Stokes'
theorem we get the following {\it{multiplier identity}}
\begin{equation} \label{multident}
\begin{split}
    \int_{t=t_0}\; ^{(X)}P_\alpha N^\alpha
    |{\mathbf{g}}|^{\frac{1}{2}} \ dx -
    \int_{t=t_1}\; ^{(X)}P_\alpha N^\alpha
    |{\mathbf{g}}|^{\frac{1}{2}} \ dx 
    = \int_{t_0}^{t_1}\int_{\mathbb{R}^n}(\frac{1}{2} \;^{(X)}\pi^{\alpha\beta}T_{\alpha\beta}) \ dV_g
\end{split}
\end{equation}
where
\[dV_g := |{\mathbf{g}}|^{\frac{1}{2}}dtd x\]
and $N$ is the vector field defined below:
\begin{equation}
    N^\alpha := \frac{-g^{\alpha\beta}\partial_\beta t}
    {(-g^{\alpha\beta}\partial_\alpha t \partial_\beta t)^{\frac{1}{2}}}
    = -g^{\alpha 0} .
\end{equation}
The integrand on the left hand side
of Eq. (\ref{multident}) is the {\it{energy density}}
associated to $X$ through the foliation by the spacelike hypersurfaces $t=const$.
\newline\newline
Given a vector field $X$ we define the
\textit{normalized deformation tensor} of $X$ to be
\begin{equation}
    ^{(X)}\hat{\pi} = \;^{(X)}\pi -
    \frac{1}{2}|{\mathbf{g}}|\cdot trace(^{(X)}\pi)
\end{equation}
which can be computed using the following identity (see \cite{Oliver}):
\begin{equation} \label{NormDTens}
    ^{(X)}\hat{\pi}^{\alpha\beta} = -|{\mathbf{g}}|^{-\frac{1}{2}}
    X(|{\mathbf{g}}|^\frac{1}{2} g^{\alpha\beta}) - g^{\alpha\beta}\partial_\gamma X^\gamma + g^{\alpha\gamma}\partial_\gamma X^\beta + g^{\beta\gamma}\partial_\gamma X^\alpha .
\end{equation}
In combination with Eq. \eqref{divergence}, we get
\begin{equation} \label{DPXhat}
    D^\alpha \big({}^{(X)} P_\alpha\big)
    = \frac{1}{2} \;^{(X)}\hat{\pi}^{\alpha\beta}\partial_\alpha u \partial_\beta u .
\end{equation}
Combining (\ref{DPXhat}) and (\ref{multident}) yields
\begin{multline} \label{multident2}
    \int_{t=t_0}\; ^{(X)}P_\alpha N^\alpha
    |{\mathbf{g}}|^{\frac{1}{2}}d x -
    \int_{t=t_1}\; ^{(X)}P_\alpha N^\alpha
    |{\mathbf{g}}|^{\frac{1}{2}}d x 
    = \int_{t_0}^{t_1}\int_{\mathbb{R}^n}
   \frac{1}{2} \Big( \;^{(X)}\hat{\pi}^{\alpha\beta}\partial_\alpha u 
    \partial_\beta u\Big)  |{\mathbf{g}}|^{\frac{1}{2}}dtd x \\
\end{multline}

\section{Proof of Energy Inequality}
In this section we will prove the energy nonincreasing
property as stated below:
\begin{lemma}
Let $u(t,x)$ be a solution to problem \eqref{KGF}. Then
\[ E(t_0) \geq E(t) \qquad  \;\forall t \geq t_0\]
\end{lemma}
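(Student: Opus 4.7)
My plan is to apply the multiplier identity \eqref{multident2} from Section 4 with the natural timelike choice $X = \partial_t$ and read off an explicit formula for $E'(t)$. First I would identify the LHS flux on a slice $\{t = \tau\}$. Because in FLRW coordinates the unit normal satisfies $N^0 = 1$ and $|{\mathbf{g}}|^{1/2} = a^n$, the integrand reduces to $T_{00}\,a^n$; a direct computation from the definition of the energy-momentum tensor gives $T_{00} = \tfrac12 u_t^2 + \tfrac{1}{2a^2}|\nabla u|^2 + \tfrac{m^2}{2}u^2 - F(u)$, whose spatial integral is exactly $E(\tau)$. Thus the LHS of \eqref{multident2} becomes $a^n(t_0)E(t_0) - a^n(t_1)E(t_1)$.

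Next I would compute the RHS. For $X = \partial_t$ the only nonvanishing component of the deformation tensor in this coordinate chart is ${}^{(X)}\pi^{ij} = 2(\dot a/a^3)\delta^{ij}$. Using that the NLKG equation \eqref{KGF} implies $D^\alpha T_{\alpha\beta} = 0$, the divergence reduces to $D^\alpha P_\alpha = (\dot a/a^3)\sum_i T_{ii}$, which I would expand using the definition of $T_{ij}$. Setting $\tilde E(t) := a^n(t)E(t)$ and differentiating \eqref{multident2} in $t$, the product rule $\tilde E' = na^{n-1}\dot a\,E + a^n E'$ then lets me solve for $E'(t)$; the terms involving $m^2u^2$ and $F(u)$ appearing in $\sum_i T_{ii}$ pair against the corresponding portions of $na^{n-1}\dot a\,E$ and cancel, leaving the clean identity
\[E'(t) = -n\frac{\dot a}{a}\|u_t\|^2 - \frac{\dot a}{a^3}\|\nabla u\|^2.\]

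Since assumption \eqref{accel_exp} gives $a(t), \dot a(t) > 0$ for all $t\geq t_0$, both terms on the right are non-positive, so integrating from $t_0$ to $t$ yields $E(t) \leq E(t_0)$ as claimed. The main obstacle I anticipate is the algebraic bookkeeping of the cancellations between the spacetime divergence and the time derivative of the conformal weight $a^n(t)$, since the divergence $D^\alpha P_\alpha$ is not sign-definite on its own in dimensions $n\geq 3$; one must combine it carefully with the growth of $a^n$ to reveal the dissipative structure. As an independent sanity check, I would redo the computation by differentiating $E(t)$ directly under the integral sign, integrating by parts on $\nabla u\cdot\nabla u_t$ (justified by the compact support of $u$ noted in Section 2.3), and substituting \eqref{KGF} in the form $u_{tt} + m^2 u - a^{-2}\Delta u - f(u) = -n(\dot a/a)u_t$, which produces the same dissipative expression without any geometric machinery.
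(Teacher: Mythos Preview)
Your argument is correct and reaches the identical dissipative identity
\[
E'(t)=-n\frac{\dot a}{a}\|u_t\|^2-\frac{\dot a}{a^3}\|\nabla u\|^2,
\]
but the route differs from the paper's. The paper does not take $X=\partial_t$; it takes $X=a^{-n}\partial_t$. That conformal rescaling is the whole point: the factor $a^{-n}$ cancels the volume weight $|{\mathbf g}|^{1/2}=a^n$ in the boundary flux, so the slice integrals are exactly $E(t_0)$ and $E(t_1)$ rather than $a^n(t_0)E(t_0)$ and $a^n(t_1)E(t_1)$. More importantly, for this weighted multiplier the \emph{normalized} deformation tensor is manifestly non-negative ($\hat\pi^{00}=2na^{-n-1}\dot a$, $\hat\pi^{ij}=2a^{-n-3}\dot a\,\delta^{ij}$), so \eqref{multident2} gives $E(t_0)\geq E(t_1)$ in one line, with no need to extract $E'$ or perform the cancellations you anticipate. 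Your approach with $X=\partial_t$ pushes the same cancellation into the algebra of $\tilde E' = na^{n-1}\dot a\,E + a^n E'$; this works, but the indefinite sign of $D^\alpha P_\alpha$ that you flag is an artifact of the multiplier choice, not an intrinsic obstacle. (Minor note: you cite \eqref{multident2} but then compute with the ordinary $\pi$ and $T_{\alpha\beta}$, i.e.\ you are really using \eqref{multident}; harmless, since the two are equivalent.) Your direct-differentiation sanity check is of course the shortest path of all and independently confirms the result.
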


\noindent \textbf{Proof:} Let $X = a^{-n}\partial_t$. We can compute the 
normalized deformation tensor of $X$ using Eq.
(\ref{NormDTens}). This yields
\begin{align*}
    ^{(X)}{\hat{\pi}^{0 0}} &= 2 n a^{-n-1}\Dot{a}  \geq 0, \hspace{1.1in}
     ^{(X)}{\hat{\pi}^{0 j}} =  0, \\
     ^{(X)}{\hat{\pi}^{i j}} &=   (2-n) (a^{-n-3}\Dot{a})\delta^{i j} + n a^{-n-3} \Dot{a} \;\delta^{i j}= 2 a^{-n-3}\Dot{a} \;\delta^{i j}  \geq 0. 
 \end{align*}

 Hence, 
 \begin{equation}
     \frac{1}{2} \big(\;^{(X)}\hat{\pi}^{\alpha\beta}\partial_\alpha u \partial_\beta u
     \big)\ |{\mathbf{g}}|^{\frac{1}{2}}
     =  n\frac{\Dot{a}}{a}|u_{t}|^2+n\frac{\Dot{a}}{a^3} |\nabla u|^2 \geq 0 . 
 \end{equation}
From Eq. (\ref{multident2}) it follows that:
\begin{equation}\label{final_energy}
    \begin{split}
       \int_{t=t_0}\; ^{(X)}P_\alpha N^\alpha |
       {\mathbf{g}}|^{\frac{1}{2}}dx &- \int_{t=t_1}\; ^{(X)}P_\alpha N^\alpha |{\mathbf{g}}|^{\frac{1}{2}}dx 
       \geq 0 .  
    \end{split}
\end{equation}

Expanding the integrand yields
\[ ^{(X)}P_\alpha N^\alpha |{\mathbf{g}}|^{\frac{1}{2}}
=-g^{\alpha 0}T_{\alpha0}a^{-n} a^{n}=T_{00}=
\frac{1}{2}\Big(|u_t|^2+a^{-2}|\nabla u|^2
+m^2 u^2-2F(u)\Big)\]

Combining this with Eq. \eqref{final_energy} gives us
\begin{equation} \label{enineq}
     E(t_0) \geq E(t) \;\;\;\; \forall t \geq t_0. \;\;\;\;\;\;\;\square
\end{equation}

\section{Proof of Blow up in Finite Time}
In this section we will prove that
the set $\mathcal{B}$ defined in Eq. (\ref{unstable})
is invariant for the Cauchy problem when the assumptions in the
main theorem are satisfied.
\subsection{Proof that $\mathcal{B}$ is an invariant set}

\begin{lemma}\label{invariant}
Let $u_0(x)\in H_0^1$ and $u_1(x)\in L^2$ and $a_0= a(t_0)$ and $\tilde{m}=min\{1,m\}$. Then
$u,u_t$ of the corresponding local solution to \eqref{KGF} satisfy $I(u(t))<0$
for all $t \leq T_{max}$ provided that $I(u(t_0))<0$ initially and
\begin{equation} 
    (u_0,u_1)\geq\frac{2(\epsilon + 2)}{\tilde{m}^2\big(\epsilon-2n\frac{\Dot{a}_0}{a_0}\big)}E(t_0) = \frac{2(\epsilon + 2)}{\frac{2}{3}\tilde{m}^2\epsilon}E(t_0) = \frac{3(\epsilon+2)}{\tilde{m}^2\epsilon}E(t_0)>0 . 
\end{equation}
\end{lemma}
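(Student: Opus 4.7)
The plan is a continuity-and-contradiction argument. Set $t^{\ast}:=\sup\{t\in[t_0,T_{\max})\,:\,I(u(s))<0\text{ for all }s\in[t_0,t)\}$. The regularity $u\in C([t_0,T_{\max});H_0^1)\cap C^1([t_0,T_{\max});L^2)$ from Section~3 makes $t\mapsto I(u(t))$ continuous, so if $t^{\ast}<T_{\max}$ then $I(u(t^{\ast}))=0$ while $I(u(t))<0$ on $[t_0,t^{\ast})$; the aim is to rule this out.

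The central computation is to differentiate $(u,u_t)$ along a solution. Substituting $u_{tt}$ from \eqref{KGF} and recognizing the Nehari functional yields
\[ \frac{d}{dt}(u,u_t) \;=\; \|u_t\|^2 - I(u) - n\frac{\dot a}{a}(1-\tilde m^2)(u,u_t). \]
On $[t_0,t^{\ast})$ the term $-I(u)$ is strictly positive, so if $(u,u_t)$ reached $0$ at a first time $t_1\in[t_0,t^{\ast})$, its derivative there would be $\|u_t(t_1)\|^2 - I(u(t_1)) > 0$, a contradiction. Hence $(u,u_t)>0$ on $[t_0,t^{\ast})$, and using $0\leq 1-\tilde m^2\leq 1$ together with $\frac{\dot a}{a}\leq\frac{\dot a_0}{a_0}\leq\frac{\epsilon}{6n}$, a Gronwall estimate then propagates the hypothesis $(u_0,u_1)\geq\frac{3(\epsilon+2)}{\tilde m^2\epsilon}E(t_0)$ to a quantitative lower bound on $(u(t^{\ast}),u_t(t^{\ast}))$.

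At $t^{\ast}$ the identity $I(u(t^{\ast}))=0$ gives $\int u f(u)\,dx = m^2\|u\|^2 + \tfrac{1}{a^2}\|\nabla u\|^2 + n\tfrac{\dot a}{a}\tilde m^2(u,u_t)$. Combining this with the structural inequality $sf(s)\geq(2+\epsilon)F(s)$ and the energy inequality $E(t^{\ast})\leq E(t_0)$ established in Section~5, one obtains
\[ (2+\epsilon)E(t_0) + n\frac{\dot a}{a}\tilde m^2(u,u_t) \;\geq\; \frac{2+\epsilon}{2}\|u_t\|^2 + \frac{\epsilon}{2}\Bigl(m^2\|u\|^2 + \tfrac{1}{a^2}\|\nabla u\|^2\Bigr). \]
A weighted AM--GM in the form $2\tilde m^2(u,u_t)\leq m^2\|u\|^2 + \|u_t\|^2$ (using $\tilde m^2\leq m^2$ and $\tilde m^2\leq 1$), together with $n\frac{\dot a}{a}\leq\epsilon/6$, absorbs the cross term on the left into the right-hand side and produces separate upper bounds on $\|u_t(t^{\ast})\|^2$ and $m^2\|u(t^{\ast})\|^2$ in terms of $E(t_0)$. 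A final Cauchy--Schwarz then gives an upper bound on $\tilde m^2(u(t^{\ast}),u_t(t^{\ast}))$ that contradicts the lower bound propagated in the previous step, completing the argument.

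The main obstacle is the cross term $-n\frac{\dot a}{a}(1-\tilde m^2)(u,u_t)$ in $\frac{d}{dt}(u,u_t)$, which breaks the clean strict monotonicity of $(u,u_t)$ available in the Minkowski setting of Yang \& Xu and Wang. It must be tamed both to preserve positivity of $(u,u_t)$ on $[t_0,t^{\ast}]$ and to close the final contradiction at exactly the hypothesized threshold; the specific denominator $\epsilon-2n\dot a_0/a_0$ in $\frac{2(\epsilon+2)}{\tilde m^2(\epsilon-2n\dot a_0/a_0)}$ is precisely what remains after absorbing the cross term $n\frac{\dot a_0}{a_0}\tilde m^2(u,u_t)$ via AM--GM, which is why this is the sharp shape of the initial-data condition.
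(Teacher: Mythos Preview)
The contradiction setup at the first time $t^\ast$ with $I(u(t^\ast))=0$, the use of the energy inequality together with $sf(s)\geq(2+\epsilon)F(s)$ to bound $(u(t^\ast),u_t(t^\ast))$ from above, and the AM--GM absorption of the cross term $n\frac{\dot a}{a}\tilde m^2(u,u_t)$ all match the paper's argument closely.

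The gap is in your propagation of the lower bound. From your identity $\frac{d}{dt}(u,u_t)=\|u_t\|^2-I(u)-n\frac{\dot a}{a}(1-\tilde m^2)(u,u_t)$, a Gronwall estimate on $[t_0,t^\ast)$ yields at best
\[
(u(t),u_t(t))\;\geq\;(u_0,u_1)\,\exp\Bigl(-n\tfrac{\dot a_0}{a_0}(1-\tilde m^2)(t-t_0)\Bigr),
\]
a bound that \emph{decays} in $t$. Since $t^\ast$ is not bounded a priori, this lower bound can be arbitrarily small and will not, in general, contradict the $t^\ast$-independent upper bound you obtain in the last step; the argument does not close. The paper proceeds differently: it sets $B(t)=\|mu\|^2$, so $B'(t)=2m^2(u,u_t)$, and asserts $B''(t)\geq 2m^2\|u_t\|^2-2m^2I(u)>0$ on $[t_0,t^\ast)$. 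This makes $B'$ strictly increasing, hence $(u(t),u_t(t))\geq(u_0,u_1)$ for all $t\in[t_0,t^\ast]$ with no decay, and that undamped lower bound $\frac{3(\epsilon+2)}{\tilde m^2\epsilon}E(t_0)$ contradicts the upper bound $\frac{6(\epsilon+2)}{5\tilde m^2\epsilon}E(t_0)$ derived at $t^\ast$. Your own formula gives $B''=2m^2\|u_t\|^2-2m^2I(u)-2m^2n\frac{\dot a}{a}(1-\tilde m^2)(u,u_t)$; when $m\geq 1$ one has $\tilde m=1$, the extra term vanishes, and both routes reduce to clean monotonicity. For $m<1$ the paper's displayed inequality drops a term of the wrong sign, so your instinct that the cross term is a genuine obstacle is correct --- but a bare Gronwall bound is not strong enough to overcome it, and you would need monotonicity (or some other $t^\ast$-independent mechanism) to finish.
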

\textbf{Proof:} For the purpose of a contradiction, we assume $\exists t^*\in (t_0,T_{max})$ where $t^*$ is the least positive value such that:
\begin{equation}
    I(u(t^*)) = 0
\end{equation}
And we therefore have:
\begin{equation} \label{nehnegreg}
    I(u(t)) < 0 \;\; \forall t \in [t_0,t^*)
\end{equation}
We define the auxiliary function:
\begin{equation}
    B(t) := \|mu\|^2
\end{equation}
and therefore
\begin{equation}
    B'(t)=2(m^2u,u_t)
\end{equation}
and we therefore arrive at the fact:
\begin{equation} \label{firstderpos}
    B'(t_0) = 2m^{2}(u_0,u_1) \geq \frac{6(\epsilon+2)}{\epsilon}E(t_0)>0
\end{equation}
and from (\ref{KGF})
\begin{equation} \label{secderdef}
    \begin{split}
        B''(t) &= 2m^2(u,u_{tt})+2m^2\|u_t\|^2 \\
        &= 2m^2\|u_t\|^2 + 2m^2\int_{\mathbb{R}^n}u(a^{-2}\Delta u - n\frac{\Dot{a}}{a}u_t+f(u)-m^2u)d x \\
        &= 2m^2\|u_t\|^2 + 2m^2\int_{\mathbb{R}^n}(-a^{-2}(\nabla u)^2 - n\frac{\Dot{a}}{a}uu_t+uf(u)-m^2u^2)d x \\
        &\geq 2m^2\|u_t\|^2 - 2m^2I(u)
    \end{split}
\end{equation}
So from (\ref{nehnegreg}) and (\ref{secderdef}) we can establish:
\begin{equation} \label{secderpos}
    B''(t)>0 \;\; \forall t \in [t_0,t^*)
\end{equation}
So from (\ref{firstderpos}) and (\ref{secderpos}) we can conclude:
\begin{equation}
    B'(t) > B'(t_0) = 2m^2(u_0,u_1)\geq \frac{6(\epsilon+2)}{\epsilon}E(t_0)>0
\end{equation}
With this we have established that $B(t)$ and $(u,u_t)$ are positive and monotonic increasing on $[t_0,t^*)$, i.e.
\begin{equation}\label{innereq}
    (u(t),u_t(t))\geq (u_0,u_1)\geq\frac{3(\epsilon +2)}{\tilde{m}^2\epsilon}E(t_0)>0, \;\;  \forall t \in (t_0,t^*)
\end{equation}
From this result we have $(u(t^*),u_t(t^*))\geq\frac{3(\epsilon+2)}{{\tilde{m}^2}\epsilon}E(t^*)$, but now we proceed using (\ref{energydef})  along the same lines as \cite{Yangxu}, but with our modified energy inequality (\ref{enineq}):

\begin{equation} \label{eq1}
\begin{split}
E(t_0)&\geq E(t)\\
&=\frac{1}{2}\| u_t\|^2 + \frac{1}{2}m^2\| u \|^2 + \frac{1}{2}a^{-2}\|\nabla u \|^2 - \int_{\mathbb{R}^n}^{} F(u) d x  \\
&\geq \frac{1}{2}\| u_t\|^2 + \frac{1}{2}m^2\| u \|^2 + \frac{1}{2}a^{-2}\|\nabla u \|^2 - \frac{1}{\epsilon + 2} \int_{\mathbb{R}^n} u f(u) d x\\
&=\frac{1}{2}\| u_t\|^2 + (\frac{1}{2} - \frac{1}{\epsilon+2})(m^2\|u\|^2 + a^{-2}\|\nabla u\|^2) + \frac{1}{\epsilon + 2}I(u)\\
&\;\;\;\;-\frac{n\Dot{a}}{a(\epsilon+2)}(\tilde{m}^2u_0,u_1)
\end{split}
\end{equation}
\newline
Let $a_{*} = a(t^{*})$. Therefore, as we know $I(u(t^*))=0$ from our assumption, we arrive at:
\begin{equation} \label{eq2}
\begin{split}
E(t_0) &\geq \frac{1}{2}\|u_t(t^*)\|^2+(\frac{1}{2} - \frac{1}{\epsilon+2})(m^2\|u(t^*)\|^2 + a^{-2}\|\nabla u(t*)\|^2)\\
&\;\;\;\;-\frac{n\Dot{a_*}}{a_*(\epsilon + 2)}(\tilde{m}^2u(t^*),u_t(t^*))\\
&= \frac{1}{2}\|u_t(t^*)\|^2+( \frac{\epsilon}{2(\epsilon+2)})(m^2\|u(t^*)\|^2 + a^{-2}\|\nabla u(t^*)\|^2)\\
&\;\;\;\;-\frac{n\Dot{a_*}}{a_*(\epsilon + 2)}(\tilde{m}^2u(t^*),u_t(t^*))\\
&\geq \frac{\epsilon}{2(\epsilon+2)}(\|u_t(t^*)\|^2+\|mu(t^*)\|^2 + a^{-2}\|\nabla u(t^*)\|^2)\\
&\;\;\;\;-\frac{n\Dot{a_*}}{a_*(\epsilon + 2)}(\tilde{m}^2u(t^*),u_t(t^*))\\
&\geq \frac{\epsilon}{2(\epsilon+2)}(2(m^2u(t^*),u_t(t^*)) + a^{-2}\|\nabla u(t^*)\|^2)\\
&\;\;\;\;-\frac{n\Dot{a_*}}{a_*(\epsilon + 2)}(\tilde{m}^2u(t^*),u_t(t^*))\\
&\geq \frac{\epsilon}{2(\epsilon+2)}(2(\tilde{m}^2u(t^*),u_t(t^*)) -\frac{n\Dot{a_*}}{a_*(\epsilon + 2)}(\tilde{m}^2u(t^*),u_t(t^*))\\
&= \frac{a_*\epsilon-n\Dot{a_*}}{a_*(\epsilon+2)}(\tilde{m}^2u(t^*),u_t(t^*))\\
&= \frac{\epsilon-n\frac{\Dot{a_*}}{a_*}}{\epsilon+2}(\tilde{m}^2u(t^*),u_t(t^*))\\
\end{split}
\end{equation}
Therefore we have: \\
\begin{equation} \label{ineqend}
    \begin{split}
        E(t_0) &> \frac{\epsilon-n\frac{\Dot{a_*}}{a_*}}{\epsilon+2}(\tilde{m}^2 u(t^*),u_t(t^*))\\
        \therefore\;\;\; (\tilde{m}^2u(t^*),u_t(t^*)) &< \frac{\epsilon +2}{\epsilon - n\frac{\Dot{a_*}}{a_*}}E(t_0)\\
        \therefore\;\;\; (\tilde{m}^2u(t^*),u_t(t^*)) &< \frac{\epsilon +2}{\epsilon - n\frac{\Dot{a_0}}{a_0}}E(t_0)\\
         \therefore\;\;\; (\tilde{m}^2u(t^*),u_t(t^*)) &< \frac{\epsilon +2}{\epsilon - \frac{1}{6} \epsilon}E(t_0)\\ 
          \therefore\;\;\; (u(t^*),u_t(t^*)) &< \frac{6(\epsilon +2)}{5\tilde{m}^2\epsilon}E(t_0)
        \end{split}
\end{equation}
\\
Which gives us that $(u(t^*),u_t(t^*))<\frac{6(\epsilon + 2)}{5\tilde{m}^2\epsilon}E(t_0)$,which obviously contradicts \ref{innereq}.
\newline
\newline
Therefore it must follow that no such $t^*$ exists, and therefore $I(u(t))<0$,   $\;\forall t>t_0$ and therefore the set $\mathcal{B}$ is invariant. $\square$
\subsection{Proof of the Main Theorem}
In this section we prove Theorem \ref{main_result}.\\

\textbf{Proof:} From Lemma \ref{invariant} we know that $u(x,t)\in\mathcal{B}$ implies
\begin{equation}
    B(t)>0 \;\; \forall t\in [t_0,T_{max})
\end{equation}
And from (\ref{firstderpos}) and the Cauchy-Schwarz Inequality we arrive at
\begin{equation}
    B'(t)^2=4m^4(u,u_t)^2 \leq 4m^4\|u\|^2\|u_t\|^2 = 4m^2B(t)\|u_t\|^2
\end{equation}
Combining with (\ref{secderdef}) we get the inequality
\begin{equation} \label{specialineq1}
    \begin{split}
        &B''(t)B(t)-\frac{\lambda +3}{4}B'(t)^2\\
        \geq &B(t)(B''(t)-(\lambda+3)m^2\|u_t\|^2)\\
        \geq &B(t)(-(\lambda+1)m^2\|u_t\|^2-2m^2I(u))
    \end{split}
\end{equation}
From our energy inequality (\ref{eq1}) we have:
\begin{align*}
E(t_0) \geq \frac{1}{2} \|u_t\|^2 + (\frac{1}{2}-\frac{1}{\epsilon +2})(m^2\|u\|^2+ a^{-2}\ |\nabla u\|^2)+\frac{1}{\epsilon +2}I(u)-\frac{n\Dot{a}}{a(\epsilon+2)}\tilde{m}^2(u,u_t)
\end{align*}
Therefore,
\begin{align*}
E(t_0) - \frac{1}{2} \|u_t\|^2 - \frac{\epsilon}{2(\epsilon +2)}(m^2\|u\|^2+a^{-2}\|\nabla u\|^2) + \frac{n\Dot{a}}{a(\epsilon+2)}\tilde{m}^2(u,u_t) \geq \frac{1}{\epsilon +2}I(u)
\end{align*}
Which gives us:
\begin{equation} \label{eq3}
2I(u) \leq 2(\epsilon +2)E(t_0)-(\epsilon + 2)\|u_t\|^2-\epsilon(m^2\|u\|^2+a^{-2}\|\nabla u\|^2) +2n\tilde{m}^2\frac{\Dot{a}}{a}(u,u_t)
\end{equation}
We now define $\xi:\mathbb{R}_{nonneg} \to \mathbb{R}$
\begin{align*}
\xi(t) := -m^2(\lambda +1)\|u_t\|^2 - 2m^2I(u)
\end{align*}
And combining this with inequality (\ref{eq3}) above we get:
\begin{align*}
\frac{\xi(t)}{m^2} &\geq -(\lambda +1)\|u_t\|^2 - 2(\epsilon +2)E(t_0)+(\epsilon + 2)\|u_t\|^2+\epsilon(m^2\|u\|^2+a^{-2}\|\nabla u\|^2)\\
&\;\;\;\;-2n\tilde{m}^2\frac{\Dot{a}}{a}(u,u_t)\\
&=(\epsilon+1-\lambda)\|u_t\|^2+\epsilon(m^2\|u\|^2+a^{-2}\|\nabla u\|^2)-2(\epsilon +2)E(t_0)\\
&\;\;\;\;-2n\tilde{m}^2\frac{\Dot{a}}{a}(u,u_t)
\end{align*}
We now substitute $\lambda = \frac{\epsilon}{2}+1$ to arrive at:
\begin{equation} \label{eq4}
\begin{split}
\frac{\xi(t)}{m^2} &\geq \frac{\epsilon}{2}\|u_t\|^2+\epsilon(m^2\|u\|^2+a^{-2}\|\nabla u\|^2)-2(\epsilon +2)E(t_0)-2n\tilde{m}^2\frac{\Dot{a}}{a}((u,u_t)\\
&> \frac{\epsilon}{2}(\|u_t\|^2+m^2\|u\|^2)-2(\epsilon+2)E(t_0)-2n\tilde{m}^2\frac{\Dot{a}}{a}(u,u_t) \\
&\geq \tilde{m}^2(\epsilon-2n\frac{\Dot{a}}{a})(u,u_t)-2(\epsilon +2)E(t_0)\\
&\geq\frac{2}{3}\epsilon \tilde{m}^2(u,u_t)-2(\epsilon +2)E(t_0)
 \end{split}
\end{equation}
\newline And since we have 
\begin{equation} (u(t),u_t(t))>(u_0,u_1)\geq\frac{3(\epsilon +2)}{\tilde{m}^2\epsilon}E(t_0)>0, \;\;  \forall t \in (t_0,T_{max})
\end{equation}
We arrive at the conclusion:
\begin{equation} \label{eq6}
\xi(t) > 0, \;\;  \forall t \in (t_0,T_{max})
\end{equation}
Now, we will, using (\ref{eq6}) to continue our inequality from (\ref{specialineq1}) consider:
\begin{equation}
    \begin{split}
&B''(t)B(t)-\frac{\lambda+3}{4}B'(t)^2 \\
\geq &B(t)(B''(t)-(\lambda+3)m^2\|u_t\|^2) \\
\geq &B(t)(-(\lambda +1)m^2\|u_t\|^2 - 2m^2I(u)) \\
= &B(t)\xi(t) >0\\
\because\; &B(t) >0 \;and \; \xi(t)>0, \;\; \forall t \in (t_0,T_{max}]\\
\therefore \;&B''(t)B(t)-\frac{\lambda+3}{4}B'(t)^2>0, \;\; \forall t \in (t_0,T_{max})
\end{split}
\end{equation}
\\We can now rejoin \cite{Wang} in the concavity argument. As $\frac{4+\epsilon}{4} > 1$, and letting $\alpha = \frac{\epsilon}{4}$, we get:
\begin{equation}
    \begin{split}
        (B^{-\alpha})' &= -\alpha B^{-\alpha - 1}B'(t)<0,\\
        (B^{-\alpha})'' &= -\alpha B^{-\alpha - 2}[B''(t)B(t)-\frac{4+\epsilon}{4}B'(t)^2] < 0\\
        &\forall t \in (t_0,T_{max}).
    \end{split}
\end{equation}
And if we consider the line tangent to $B^{-\alpha}$ at $t = t_0$:
\begin{equation}
    G_{B^{-\alpha}(t_0)}(t):= -\alpha B^{-\alpha-1}(t_0)B'(t_0)(t-t_0) + B^{-\alpha}(t_0)
\end{equation}
And solve for $G_{B^{-\alpha}(t_0)}(t) = 0$, we get:
\begin{equation}
\begin{split}
    0 &= -\alpha B^{-\alpha-1}(t_0)B'(t_0)(t-t_0) + B^{-\alpha}(t_0)\\
    \therefore \;  B^{-\alpha}(t_0) &= \alpha B^{-\alpha-1}(t_0)B'(t_0)(t-t_0)\\
    \therefore \; t &= \frac{B^{-\alpha}(t_0)}{\alpha B^{-\alpha-1}(t_0)B'(t_0)} + t_0 = t_0 +\frac{4B(t_0)}{\epsilon B'(t_0)}
\end{split}
\end{equation}
Therefore it follows that $B^{-\alpha}$ is concave down and decreasing and subsequently bounded above by $G_{B^{-\alpha}(0)}(t)$. And since we know $B(t_0)>0$ it must follow that $\exists T_{max} < \frac{4 B(t_0)}{\epsilon B'(t_0)}$ such that:
\begin{equation}
\begin{split}
    \lim_{t\to T_{max}^-} B^{-\alpha} &= 0\\
    \therefore \;\lim_{t\to T_{max}^-} B(t) &= \lim_{t\to T_{max}^-} \|u(t,.)\|^2 \to \infty \;\;\;\;\;\square.
\end{split}
\end{equation}
\begin{figure}[h!]
\centering
\includegraphics[scale=.4]{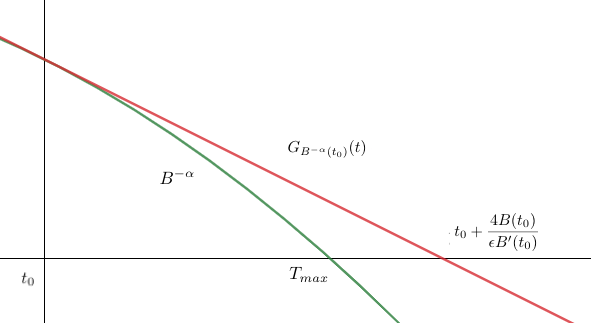}
\caption{Demonstration of the qualitative behavior that leads to the existence of $T_{max}$}
\label{demofig}
\end{figure}

\section{Applicable Cosmology Examples Revisited}

Here we will explore two specific cases for the expansion factor $a(t)$. The first is the de Sitter case, where $a(t)=e^{Ht}$ for some $H>0$. This case serves as a manner of ``upper limit'' on the rate of growth of $a(t)$, as we can see if we consider $H=\frac{\epsilon}{6n}$. We then see $\frac{\Dot{a}}{a} = \frac{\epsilon}{6n}$, the upper limit of this ratio from our general case. Clearly for any function consistently above exponential order this ratio would result in an increasing function, which could not be bounded forward in time as we require.

The other case is the FLRW expansion factor $a(t)=t^k$ such that $k>1$. Here we see the benefit of being well under exponential order, but we find our $\frac{\Dot{a}}{a}$ ratio is undefined at $t=0$. This, as we will explain, is the inspiration for our $t_0$ requirement in the general case of our argument above. While many expansion factors will allow for $t_0=0$ (Minkowski spacetime
and de Sitter come to mind), extreme behavior at $t=0$ can require $t_0$ to be placed adequately far in the future.


\subsection{The de Sitter Case}

Of course, in the de Sitter case, where we have $a(t)=e^{Ht}$, we 
satisfy our conditions on $a(t)$ as well as the energy inequality with 
$t_0=0$ with the restriction that $H\leq \frac{\epsilon}{6n}$. This 
gives us $\frac{\Dot{a}}{a}=H \leq \frac{\epsilon}{6n}$ and 
$E(t_0)\geq E(t) \forall t\in [0,\infty)$, clearly.

The argument for Lemma \ref{invariant}
therefore proceeds as follows in this case:
\begin{equation} \label{eq1dS}
\begin{split}
E(t_0)&\geq E(t)\\
&=\frac{1}{2}\| u_t\|^2 + \frac{1}{2}m^2\| u \|^2 + \frac{1}{2}e^{-2H}\|\nabla u \|^2 - \int_{\mathbb{R}^n}^{} F(u) d x  \\
&\geq \frac{1}{2}\| u_t\|^2 + \frac{1}{2}m^2\| u \|^2 + \frac{1}{2}e^{-2H}\|\nabla u \|^2 - \frac{1}{\epsilon + 2} \int_{\mathbb{R}^n} u f(u) d x\\
&=\frac{1}{2}\| u_t\|^2 + (\frac{1}{2} - \frac{1}{\epsilon+2})(m^2\|u\|^2 + e^{-2H}\|\nabla u\|^2) + \frac{1}{\epsilon + 2}I(u)\\
&\;\;\;\;-\frac{nH}{\epsilon+2}(\tilde{m}^2 u_0,u_1)
\end{split}
\end{equation}

And following along the same lines as the general argument, with our restriction on $H$, we arrive at the same contradiction arrived at with \ref{ineqend}.

This particular example is enlightening not only because of the 
pertinence to the current assumed nature of our universe, but also 
for making clear that $t_0$ is only a positive quantity in cases 
where our $a(t)$ conditions cannot be satisfied over all nonnegative 
$t$, whether due to discontinuity or some other 
early behavior associated with the expansion factor.

This example also matches the intuition that one would have for how 
blow-up would be influenced by our choice of $a$. The limit on $H$ 
serves as a limit on on the rate of expansion, which if too great 
might ``smooth out" too quickly to allow for blowup. Furthermore,
the local
existence result in Theorem 1.2 of \cite {Gal1} also requires that
 $H$ be small relative to $m$.
\subsection{The FLRW Case} \label{FLRWC}

Here we have $a(t)=t^k$ and the metric is therefore expressed as follows:
\begin{equation}
    {\mathbf{g}} := -dt^2 + t^{2k}(dx_1^2+dx_2^2+...+dx_n^2) .
\end{equation}

When we consider the $\frac{\Dot{a}}{a}$ ratio we arrive at:
\begin{equation}
    \frac{\Dot{a}(t)}{a(t)} = knt^{-1}
\end{equation}
Thus the discontinuity at $t=0$ requires that we consider our initial time $t_0$ as follows:
\begin{equation}\label{FLRW_initial_time}
	\frac{6kn}{\epsilon}= t_0 . 
\end{equation}

\begin{equation}
\begin{split}
E(t_0)&\geq E(t)\\
&=\frac{1}{2}\| u_t\|^2 + \frac{1}{2}m^2\| u \|^2 + \frac{1}{2}t^{-2k}\|\nabla u \|^2 - \int_{\mathbb{R}^n}^{} F(u) d x  \\
&\geq \frac{1}{2}\| u_t\|^2 + \frac{1}{2}m^2\| u \|^2 + \frac{1}{2}t^{-2k}\|\nabla u \|^2 - \frac{1}{\epsilon + 2} \int_{\mathbb{R}^n} u f(u) d x\\
&=\frac{1}{2}\| u_t\|^2 + (\frac{1}{2} - \frac{1}{\epsilon+2})(m^2 \|u\|^2 + t^{-2k}\|\nabla u\|^2) + \frac{1}{\epsilon + 2}I(u)\\
&\;\;\;\;-\frac{knt^{-1}}{\epsilon+2}(\tilde{m}^2u_0,u_1)
\end{split}
\end{equation}
Again, following the lines of the general argument, we quickly arrive at 
the same contradiction of \ref{innereq} that allows the proof to proceed.
With the curtailed expansion of this polynomial case, we see that there are 
no restrictions reminiscent of those that had to be placed on H in the de 
Sitter case. Rather, here we have to contend with the discontinuity at 
$t=0$, which is avoided by placing our start time suitably far from that 
initial singularity.

It is worth noting that the general argument is widely applicable,
and these simple, well-studied metrics fall within the restrictions
on the expanding factor required for the argument. One can also think
of the exponential de Sitter case as an upper bound for this factor,
as it is the most rapidly expanding expansion factor that can remain
within these restrictions.

\section{Acknoledgements}

Jes\'us Oliver was supported by an AMS-Simons Research Enhancement Grant for Primarily Undergraduate Institution Faculty.

\section{Appendix: Lower Bound for the Initial Data}

In this section we prove that the assumptions
of our main theorem imply that the norm of the
initial data
$u_0(x)\in H_0^1$, $u_1(x)\in L^2$
has a positive uniform lower bound. 

\begin{theorem}
    Suppose that all the hypotheses of Theorem \ref{main_result} are satisfied.
    Then there exists a uniform constant $C_{min}>0$ such that
    \[C_{min}<\|u_0\|_{H_0^{1}}\]
\end{theorem}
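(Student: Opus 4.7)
The plan is to exploit the sign condition $I(u_0)<0$ together with the Sobolev embedding that is implicit in the Lipschitz hypothesis on $f$. Writing out $I(u_0)<0$ explicitly gives
\[
\int_{\mathbb{R}^n} u_0 f(u_0)\,dx \;>\; m^2\|u_0\|^2 + \frac{1}{a_0^2}\|\nabla u_0\|^2 + n\frac{\dot{a}_0}{a_0}\tilde{m}^2(u_0,u_1).
\]
Since $(u_0,u_1)>0$ by \eqref{preconditions} and $\dot{a}_0/a_0>0$ by \eqref{accel_exp}, the last term is nonnegative and may be discarded. What remains is bounded below by $C_0\|u_0\|_{H^1}^2$, where $C_0:=\min(m^2,a_0^{-2})>0$.

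Next I would translate the Lipschitz hypothesis into a matching upper bound. Setting $v=0$ in the Lipschitz definition and using $f(0)=0$ yields $|f(s)|\leq C|s|^{\alpha+1}$, hence $|sf(s)|\leq C|s|^{\alpha+2}$, so
\[
\int_{\mathbb{R}^n} u_0 f(u_0)\,dx \;\leq\; C\|u_0\|_{L^{\alpha+2}}^{\alpha+2}.
\]
The hypothesis $0\leq\alpha\leq\tfrac{2}{n-2}$ is exactly what guarantees $\alpha+2\leq\tfrac{2n}{n-2}$, the critical Sobolev exponent, so $\|u_0\|_{L^{\alpha+2}}\leq C_S\|u_0\|_{H^1}$ and therefore $\int u_0 f(u_0)\,dx\leq C'\|u_0\|_{H^1}^{\alpha+2}$.

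Combining the two bounds gives $C_0\|u_0\|_{H^1}^{2}< C'\|u_0\|_{H^1}^{\alpha+2}$. Because $I(u_0)<0$ forces $u_0\not\equiv 0$, we have $\|u_0\|_{H^1}>0$ and may divide to obtain
\[
\|u_0\|_{H^1}^{\alpha} \;>\; \frac{C_0}{C'},
\]
so the desired constant is $C_{\min}:=(C_0/C')^{1/\alpha}$. Recalling that $H_0^1$ inherits the $H^1$ norm in the paper's convention, this is precisely the claimed lower bound on $\|u_0\|_{H_0^1}$.

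The main obstacle is the borderline case $\alpha=0$, where raising to the power $1/\alpha$ is illegal and the argument collapses. Here one should observe that the admissibility condition $f(s)s\geq(2+\epsilon)F(s)$ with $\epsilon>0$ is incompatible with a purely linear $f$, and in all the natural examples discussed in Section 2 (power nonlinearities with $p\geq 1+\epsilon$) one has $\alpha=p-1\geq\epsilon>0$, so the degenerate case does not arise; alternatively, when $\alpha=0$ one reads the same chain of inequalities as the constraint $C'>C_0$ on the Lipschitz constant, and a distinct argument is needed to pin down a numerical lower bound if one insists on treating this case. Apart from this edge case, the proof is a direct chain of elementary estimates with no substantive technical difficulty.
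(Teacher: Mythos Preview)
Your proof is correct and follows essentially the same route as the paper: expand $I(u_0)<0$, drop the positive cross term coming from $(u_0,u_1)>0$, bound $\int u_0 f(u_0)$ via the Lipschitz growth $|f(s)|\le C|s|^{\alpha+1}$ and Sobolev embedding, and then divide through to isolate $\|u_0\|_{H^1_0}^{\alpha}$. Your flag on the degenerate case $\alpha=0$ is apt---the paper's argument also silently requires $\alpha>0$ at the final step $C_{\min}^{-1/\alpha}<\|u_0\|_{H^1_0}$.
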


\begin{proof}
By the density of $C^{\infty}_{c}$ in $H^1_0$,
it suffices to consider smooth,
compactly supported functions. 
The condition $u_0\in \mathcal{B}$ (equivalently $I(u(t_0))<0$) implies
\begin{equation}
   \int_{\{t=t_0\}\times\mathbb{R}^n}\left(m^2u^2 + 
   \frac{1}{a^2}(\nabla u)^2 + n\frac{\Dot{a}}{a}\Tilde{m}^2uu_t
   \right) dx
   <  \int_{\{t=t_0\}\times\mathbb{R}^n} uf(u) \ d x.
   \label{app1}
\end{equation}
By assumption $m>0$; by Eq. \eqref{accel_exp} $a(t_0)$ and $\dot{a}(t_0)$ are also positive. By Eq. \eqref{preconditions} we also have a
lower bound for the last term on the LHS of \eqref{app1}.
Consequently
\begin{equation}
   0<\int_{\{t=t_0\}\times\mathbb{R}^n}\left(u^2 + 
   (\nabla u)^2 
   \right) dx+E(t_0)
   <  C\int_{\{t=t_0\}\times\mathbb{R}^n} uf(u) \ d x.
\end{equation}
Since $f$ is Lipshitz continuous with exponent $0\leq \alpha\leq \frac{2}{n-2}$, we have
\[|f(u)-f(0)|=|f(u)-0|\leq C|u|^{\alpha+1} . \]  
Using this together with the triangle inequality,
and $E(t_0)>0$
we can conclude
\begin{equation}
   \|u_0\|_{H_0^1}^2<C\int_{\{t=t_0\}\times\mathbb{R}^n} uf(u) \ d x
   <  C\|u_0\|_{L^{\alpha+2}}^{\alpha+2} \label{temp1}
\end{equation}
Next, using the Sobolev Embedding Theorem (see for example Thm. 27.18 in \cite{Driv})
we have the bound
\[\|u_0\|_{L^{q^{*}}}\leq C \|u_0\|_{H^{1}}\]
with $q^{*}=\frac{4}{n-2}$. Since
$0\leq \alpha\leq \frac{2}{n-2}$, we also have
\[\alpha\leq \frac{2}{n-2}<\frac{4}{n-2}=\frac{2n-2n+4}{n-2}
\qquad \Rightarrow \qquad
\alpha+2<\frac{2n}{n-2},
\]
Therefore \eqref{temp1} and the compact support of $u$ yields
\begin{equation*}
   \|u_0\|_{H_0^1}^2<  C\|u_0\|_{L^{\alpha+2}}^{\alpha+2}\leq
   C\|u_0\|_{L^{q^*}}^{q^*(\alpha+2)}< C_{min}\|u_0\|^{\alpha+2}_{H_0^{1}}
\end{equation*}
where $C_{min}$ is a uniform constant depending only on the Sobolev
embedding inequality and on the inequalities assumed in Theorem \ref{main_result}. As a result, we
have the lower bound
\[C_{min}^{-\frac{1}{\alpha}}<\|u_0\|_{H_0^{1}}\]
from which the theorem follows.
\end{proof}

\bibliographystyle{plain}
\end{document}